\definecolor{keywords}{RGB}{178,34,34}
\definecolor{comments}{RGB}{112,138,144}
\definecolor{background}{RGB}{255,245,238}
\newtheorem{theorem}{Theorem}[section]
\newtheorem{definition}[theorem]{Definition}
\newtheorem{proposition}[theorem]{Proposition}
\newcommand{\norm}[1]{\left\Vert#1\right\Vert}
\newcommand{\betrag}[1]{\left\vert#1\right\vert}
\newcommand{\ip}[2]{\left<#1,#2\right>}
\newcommand{\R}{\mathbb{R}}
\newcommand{\N}{\mathbb{N}}
\newcommand{\Landau}{\mathcal{O}}
\newcommand{\nL}{\par\medskip}
\title{Numerical Differentiation using local Chebyshev-Approximation}
\author{Stefan H. Reiterer}
\begin{document}

\maketitle

\begin{abstract}

\end{abstract}

\section{Motivation}

In applied mathematics, especially in optimization, functions are often only provided as so called "Black-Boxes" provided by software packages, or very complex 
algorithms, which make automatic differentation very complicated or even impossible. Hence one seeks the numerical approximation of the derivative.

Unfortunately numerical differentation is a difficult task in itself, and it is well known that it is numerical instable.
There are many works on this topic, including the usage of (global) Chebyshev approximations. Chebyshev approximations have the great property
that they converge very fast, if the function is smooth. Nevertheless those approches have several drawbacks, since in practice functions are
not smooth, and a global approximation needs many function evalutions.

Nevertheless there is hope. Since functions in real world applications are most times smooth except for finite points, corners or edges. This motivates
to use a local Chebyshev approach, where the function is only approximated locally, and hence the Chebyshev approximations still yields a fast approximation
of the desired function. We will study such an approch in this work, and will provide a numerical example.

Disclaimer: This work was done as private research during my study years and is not related to my current affiliation at all. No funding was recieved for this work.

\section{Chebyshev Polynomials and One Dimensional Functions}

First we recall some well known definitions. the following definitions and properties are taken from \cite{boyd}.
\begin{definition}[Chebyshev-Polynomials]
For $n \in \N_0$ the $n$-th Chebyshev polynomial $T_n(x): \,[-1,1] \rightarrow \R$ is given by $T_n(x) = \cos(n \arccos(x))$.
From this definition we see that the Chebyshev polynomial takes it's extrema at
\[ x_k := -\cos\left(\frac{k\pi}{n} \right) \text{ for } k = 0,\ldots,n. \]
The points $(x_k)_{k=0}^n$ are the so called Gauss-Lobatto Grid-points.\nL
\end{definition}
The best way to approximate functions via Chebyshev-polynomials lies in the following
\begin{proposition}[Chebyshev-Series \& Co]
Set $\omega(x):= \frac{2}{\pi}(1-x^2)^{-1/2}$, then \[ \ip{f}{g}_{\omega} := \int\limits_{-1}^1 f(x)g(x) \omega(x) \,dx \]
forms the (weighted) scalar product of the Hilbert-Space
\[L_{2,\omega} := \left\{ f:\, [-1,1]\rightarrow \R\, | \norm{f}_{L_{2,\omega}}=\sqrt{\ip{f}{f}_{\omega}} < \infty \right\}.\] 
Then for a function $f\in L_{2,\omega}$ the Chebyshev-Coefficients 
\[ a_n := \ip{f}{T_n}_{\omega}, \]
exists, and the Chebyshev-Series
\[ \frac{a_0}{2} + \sum\limits_{n=1}^{\infty} a_n T_n(x), \]
converges in the $L_{2,\omega}$ sense to $f$.
\end{proposition}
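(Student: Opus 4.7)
The plan is to transplant the whole problem to the Fourier cosine setting via the substitution $x = \cos\theta$, $\theta\in[0,\pi]$, and then invoke the classical $L^2$ theory of Fourier series. Under this substitution one has $dx = -\sin\theta\, d\theta$, so $\omega(x)\,dx = \tfrac{2}{\pi}\,d\theta$, and $T_n(\cos\theta) = \cos(n\theta)$ by the very definition of $T_n$. Hence the map
\[
\Phi: L_{2,\omega}([-1,1]) \longrightarrow L^2\bigl([0,\pi],\tfrac{2}{\pi}d\theta\bigr), \qquad (\Phi f)(\theta) := f(\cos\theta),
\]
is an isometric isomorphism onto its image, and $\Phi T_n(\theta) = \cos(n\theta)$.

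First I would verify that $\langle \cdot,\cdot\rangle_\omega$ is indeed a scalar product and that $L_{2,\omega}$ is a Hilbert space; this is immediate from the fact that $\omega$ is strictly positive on $(-1,1)$ and $\Phi$ is an isometry onto the target space, which is a standard Hilbert space. Next I would compute
\[
\ip{T_n}{T_m}_{\omega} = \frac{2}{\pi}\int_0^\pi \cos(n\theta)\cos(m\theta)\,d\theta,
\]
which gives the orthogonality relations: $0$ for $n\neq m$, $1$ for $n = m \geq 1$, and $2$ for $n = m = 0$. The asymmetry at $n=0$ explains the factor $\tfrac12$ in front of $a_0$ in the series.

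Existence of the coefficients $a_n = \ip{f}{T_n}_\omega$ then follows from Cauchy--Schwarz, since $\norm{T_n}_{L_{2,\omega}} \leq \sqrt{2}$. With orthogonality in hand, Bessel's inequality shows that the partial sums $S_N f := \tfrac{a_0}{2} + \sum_{n=1}^N a_n T_n$ form a Cauchy sequence in $L_{2,\omega}$, hence converge to the orthogonal projection of $f$ onto $\overline{\Span\{T_n : n \geq 0\}}$.

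The one nontrivial step, and the one I expect to be the main obstacle, is completeness: showing that $\overline{\Span\{T_n\}} = L_{2,\omega}$. The cleanest route is via $\Phi$: since $\{\cos(n\theta)\}_{n\geq 0}$ is an orthogonal basis of the subspace of even functions in $L^2([-\pi,\pi])$ (equivalently, an orthogonal basis of $L^2([0,\pi])$, by the classical theory of Fourier cosine series), and since $\Phi$ maps $T_n$ precisely to $\cos(n\theta)$, completeness of the Chebyshev system follows at once from completeness of the cosine system. Alternatively, one can argue by Weierstrass: algebraic polynomials are dense in $C([-1,1])$, which is dense in $L_{2,\omega}$ (because $\omega$ is integrable on $[-1,1]$), and each polynomial lies in $\Span\{T_n\}$ since $\{T_0,\dots,T_N\}$ is a basis of the polynomials of degree $\leq N$. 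Either route closes the argument, yielding $L_{2,\omega}$-convergence of the Chebyshev series to $f$.
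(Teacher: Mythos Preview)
Your proposal is correct and is precisely the approach the paper takes: the paper's proof consists of the single sentence that these facts follow from applying classical Fourier theory to $\tilde f = f(\cos t) \in L_2(0,\pi)$, which is exactly your substitution $x=\cos\theta$ and the isometry $\Phi$. You have simply unpacked this one-line reference into the explicit orthogonality computation, Bessel's inequality, and the completeness argument via the cosine basis (or Weierstrass); nothing is missing or divergent.
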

\begin{proof}
These well known facts follow from applying classical Fourier-theory to \nL $\tilde{f} = f(\cos(t)) \in L_2(0,\pi)$.
\end{proof}
Since Chebyshev-Polynomials are ``cosines in disguise'' other important properties from Fourier-theory carry over to Chebyshev-series,
like spectral convergence for smooth functions etc.

Another way to approximate a function is to use the Chebyshev-Interpolation Polynomial $C_n = \sum\limits_{k=0}^{n} = b_k T_k$,
which is uniquely defined by $f(x_k) = C_n(x_k)$ for $k=0,\ldots,n$. \nL
Although the Chebyshev-Series yields the best approximation there is a little known fact between the two approximation types, namely the
\begin{proposition}
Let $f:\,[-1,1] \rightarrow \R$ a function with
\[ f(x) = \frac{a_0}{2} + \sum\limits_{n=1}^{\infty} a_n T_n(x), \]
and \[ \frac{\betrag{a_0}}{2} + \sum\limits_{n=1}^{\infty} \betrag{a_n} < \infty. \]
Further let for $f_N(x) := \frac{a_0}{2} + \sum_{n=1}^{N} a_n T_n(x)$,
\[ E_T(N) := \sup_{x\in[-1,1]} \betrag{f(x) - f_N(x)}, \]
be the trunctation error, and
Then
\[ E_T(N) \leq \sum\limits_{n=N}^{\infty} \betrag{a_n}, \]
and for the interpolation error
\[ \sup_{x\in[-1,1]} \betrag{f(x) - C_N(x)} \leq 2 E_T(N). \]
Hence the penalty for using interpolation instead of truncation is at most a \textbf{factor of two}!
Additionally the coefficients $b_k$ of the interpolation polynomial $C_n$ are related to the exact coefficents $a_k$ by the identity
\begin{equation} \label{eq:aliasing}
 b_k = a_k + \sum\limits_{j=1}^{\infty} a_{k+ 2jn} + a_{-k+2jn}.
\end{equation}
That means the approximated coefficients differ from the exact coefficients by the aliasing-error, and hence the error vanishes with $\mathcal{O}(a_{n})$
\end{proposition}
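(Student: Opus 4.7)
The plan is to dispose of the three claims separately, with the aliasing identity \eqref{eq:aliasing} doing the heavy lifting for the interpolation bound.

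For the truncation error I would only invoke the triangle inequality together with the uniform bound $|T_n(x)| \le 1$ valid for $x\in[-1,1]$. Writing $f(x)-f_N(x)=\sum_{n>N} a_n T_n(x)$ (the absolute convergence hypothesis is exactly what makes this pointwise representation legitimate and lets me pull the supremum inside the sum), the estimate $E_T(N)\le \sum_{n\ge N}|a_n|$ is immediate.

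The aliasing identity \eqref{eq:aliasing} would be the technical heart of the argument. First I would record the key periodicity fact: at a Gauss--Lobatto node $x_\ell=-\cos(\ell\pi/N)$ one has $T_m(x_\ell)=\cos(m\ell\pi/N)$, so $\cos((k+2jN)\ell\pi/N)=\cos(k\ell\pi/N)$ and $\cos((-k+2jN)\ell\pi/N)=\cos(k\ell\pi/N)$; hence $T_{k+2jN}(x_\ell)=T_{-k+2jN}(x_\ell)=T_k(x_\ell)$ for every $j\in\N$ and every node $x_\ell$. Plugging the (absolutely convergent) Chebyshev series of $f$ into $f(x_\ell)$ and regrouping the aliased modes onto the ``base'' index $k\in\{0,\dots,N\}$ yields $f(x_\ell)=\sum_{k=0}^N \bigl(a_k+\sum_{j\ge 1}(a_{k+2jN}+a_{-k+2jN})\bigr)T_k(x_\ell)$. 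Since the interpolant $C_N$ is the unique polynomial of degree $\le N$ matching these values, reading off the coefficients gives precisely the formula for $b_k$. The one point that has to be checked carefully is the interchange of the sum over $k$ with the tail over $j$, which is again justified by the absolute summability assumption.

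For the interpolation error I would split $f-C_N=(f-f_N)+(f_N-C_N)$. The first summand is bounded uniformly by $E_T(N)$ by definition. For the second, \eqref{eq:aliasing} gives $b_k-a_k=\sum_{j\ge 1}(a_{k+2jN}+a_{-k+2jN})$, so
\begin{equation*}
\sup_{x\in[-1,1]}\bigl|f_N(x)-C_N(x)\bigr|
\le \sum_{k=0}^N |b_k-a_k|
\le \sum_{k=0}^N\sum_{j\ge 1}\bigl(|a_{k+2jN}|+|a_{-k+2jN}|\bigr).
\end{equation*}
The combinatorial point to verify is that, as $(k,j)$ ranges over $\{0,\dots,N\}\times\N_{\ge 1}$, every index $n>N$ occurs at most once among $k+2jN$ and at most once among $|-k+2jN|$, so the double sum is controlled by $\sum_{n>N}|a_n|\le E_T(N)$. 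Combining the two halves gives the factor of two. I expect this bookkeeping on aliased indices, together with the justification of the series rearrangements, to be the main (though mild) obstacle; everything else is a direct consequence of $|T_n|\le 1$ and the node periodicity of the cosine.
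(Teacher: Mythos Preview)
Your outline is exactly the standard argument that the paper defers to by citation (the paper's ``proof'' is only the reference to Boyd, Theorems~6 and~21): triangle inequality with $|T_n|\le 1$ for the truncation bound, cosine periodicity at the Gauss--Lobatto nodes for the aliasing identity, and then the splitting $f-C_N=(f-f_N)+(f_N-C_N)$ combined with the aliasing bookkeeping for the interpolation bound. So in substance you and the paper (via Boyd) take the same route.

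Two small slips to tidy. First, your closing inequality ``$\sum_{n>N}|a_n|\le E_T(N)$'' is reversed; part one gives $E_T(N)\le\sum_{n>N}|a_n|$, not the other way. What your combinatorics actually yields (and what Boyd in fact states) is $\sup_{x}|f(x)-C_N(x)|\le 2\sum_{n>N}|a_n|$; the paper's phrasing ``$\le 2E_T(N)$'' is a slight overstatement of the cited result, and you should not try to match it literally. Second, your index count is not quite ``at most once total'': the two families $\{k+2jN\}$ and $\{-k+2jN\}$ overlap exactly at multiples of $N$ (e.g.\ $n=2N$ arises from $(k,j)=(0,1)$ in both), which is why the standard discrete transform halves the endpoint coefficients $b_0$ and $b_N$. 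A cleaner way to get the sharp factor of two, avoiding the split altogether, is to write $f-C_N=\sum_{n>N}a_n\bigl(T_n-\widetilde T_n\bigr)$ with $\widetilde T_n$ the aliased $\pm T_m$ ($m\le N$) and use $|T_n-\widetilde T_n|\le 2$; this is the form Boyd's proof takes.
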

\begin{proof}
See \cite[Thm. 6\& Thm. 21]{boyd}.
\end{proof}
Since with Chebyshev interpolation we are close to the realm of the DFT, additional methods from signal processing, like denoising could be considered to handle
numerical distortions.\nL
To work with a function $g:\, [a,b] \rightarrow \R$ a generalized Chebyshev interpolation for $g$ 
can be achieved by using a linear transformation $\varphi:\, [a,b] \rightarrow [-1,1]$, and interpolate the function $f = g\circ \varphi$. \nL

Since computing the derivative of a polynomial can be done exactly we could use the derivative of $C_N$ for some $N$, as the numerical derivative of $f$.
This means
\[ f^{\prime} \approx C_N^{\prime}. \]
Then how about the errors?
First recall the identity $T^{\prime}_n(x) = n U_{n-1}$.
Hence for the truncation error we have
\[ f^{\prime}(x) - f^{\prime}_N(x) = E_N^{\prime}(x) = \sum\limits_{n=N}^{\infty} n a_n U_{n-1}(x). \]
Considering that $U_n$ has it's extrema at $\pm 1$ with $U_{n-1}(\pm 1) = (\pm 1)^n n$,
we immediately see that
\[ \sup_{x\in[-1,1]}\betrag{f^{\prime}(x) - f^{\prime}_N(x)} \leq \sum\limits_{n=N}^{\infty} n \betrag{a_n} \sup_{x\in[-1,1]} \betrag{U_{n-1}(x)} \leq \sum\limits_{n=N}^{\infty} n^2 \betrag{a_n} = 
\Landau(n^2 a_n). 
\]
For the interpolation polynomial $C_N$ we get by using the aliasing identity \eqref{eq:aliasing} and rearranging terms like in the proof of \cite[Thm. 21]{boyd}
the error estimation
\[ \betrag{f^{\prime}(x) - C^{\prime}_N(x)} \leq 2 \sum\limits_{n=N}^{\infty} n^2 \betrag{a_n} = 
\Landau(2 n^2 a_n). 
\]
Hence we have the
\begin{proposition} \label{prop:derivative}
For $f:\, [-1,1] \rightarrow \R$ smooth we have the error estimation
\[ \sup_{x\in[-1,1]} \betrag{f^{\prime}(x) - C^{\prime}_N(x)} \leq 2 E_T^{\prime}(N) \Landau(2 n^2 a_n). \]
Hence the penalty for using interpolation instead of truncation when differentiating is at most a \textbf{factor of two}!
\end{proposition}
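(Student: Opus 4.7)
The plan is to mirror the two-step strategy used in the preceding proposition, but now applied to the differentiated series. First I would bound the truncation error for $f'$ directly from the Chebyshev expansion, and then transfer that bound to the interpolation polynomial via the aliasing identity \eqref{eq:aliasing}.

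Step one is to differentiate the Chebyshev expansion term-by-term, which is justified by the assumption $\sum n \betrag{a_n} < \infty$ (valid for smooth $f$ by spectral decay of the coefficients). Using $T_n^{\prime}(x) = n\, U_{n-1}(x)$ together with the elementary fact that $\sup_{x\in[-1,1]} \betrag{U_{n-1}(x)} = n$, attained at $x = \pm 1$, the triangle inequality yields
\[ E_T^{\prime}(N) = \sup_{x\in[-1,1]} \betrag{f^{\prime}(x) - f_N^{\prime}(x)} \leq \sum_{n=N}^{\infty} n^2 \betrag{a_n}, \]
which already gives the $\Landau(n^2 a_n)$ factor in the statement.

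Step two is to compare $C_N^{\prime}$ with $f_N^{\prime}$. I would expand
\[ C_N^{\prime}(x) - f_N^{\prime}(x) = \sum_{k=1}^{N} k\,(b_k - a_k)\, U_{k-1}(x), \]
substitute \eqref{eq:aliasing} to rewrite $b_k - a_k$ as the aliasing tail in the $a_n$ with $n \geq N$, and then interchange the order of summation in the style of Boyd's proof of Thm.~21. Applying $\betrag{U_{k-1}} \leq k$ and the triangle inequality, the resulting bound is again of the form $\sum_{n\geq N} n^2 \betrag{a_n}$, i.e.\ another copy of $E_T^{\prime}(N)$. Combining with step one through $\betrag{f^{\prime} - C_N^{\prime}} \leq \betrag{f^{\prime} - f_N^{\prime}} + \betrag{f_N^{\prime} - C_N^{\prime}}$ produces the advertised factor of two.

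The main obstacle is the bookkeeping in step two: after aliasing a coefficient $a_{k+2jN}$ appears multiplied only by $k$, not by $k+2jN$, so when rearranging one must verify that every contribution of $a_n$ with $n \geq N$ is still weighted by something no larger than $n^2$. Since $k \leq k+2jN = n$, this weight comparison holds termwise and the discrete Fubini step goes through cleanly; the same mechanism that makes Boyd's factor-of-two argument work for function values carries over essentially verbatim to the derivative, with only the $n^2$-prefactor replacing the plain $\betrag{a_n}$.
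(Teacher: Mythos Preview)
Your proposal is correct and follows essentially the same approach as the paper: the paper's argument (given in the paragraphs immediately preceding the proposition) also differentiates the Chebyshev series termwise to bound $E_T'(N)$ by $\sum_{n\geq N} n^2\betrag{a_n}$, and then invokes the aliasing identity \eqref{eq:aliasing} together with the rearrangement from Boyd's Theorem~21 to obtain the factor of two for $C_N'$. Your additional remark that the aliased coefficient $a_{k+2jN}$ appears with weight $k \leq k+2jN$ is a useful clarification that the paper leaves implicit in its appeal to Boyd's proof.
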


%

\section{Practical Considerations in the 1D Case and Generalizations}

In this section consider a function $f:\, [a,b] \rightarrow \R$, which is continuous and piecewise smooth, but not differentiable in  
a set of finite points $(y_k)_{k\in I} \subset [a,b]$ (with $I$ finite and allowed to be empty).

If we interpolate now the function $f$ on the interval $[a,b]$, we know from Fourier-Theory, that the error will converge only slowly to zero.
We also note that it is easy to see that (from variable transformations) that error estimation depends on the length of the interval $[a,b]$.
with the factor $(\frac{b-a}{2})^N$. Hence if one restricts the function $f$ only on a small interval, one achieves more precise interpolation results, with
lesser degrees of the interpolation polynomial.
If we restrict the function $f$ to some interval $[c,d] \subset [a,b]$, with
\begin{enumerate}
 \item $d-c < 1$ and
 \item $y_i \not\in (c,d)$ for $i \in I$,
\end{enumerate}
then the sequence of the Chebyshev interpolation polynomials $(C_N)_{N\in\N}$ converges very fast (with spectral convergence) 
and uniformly to the original function in the interval $[c,d]$.

From Proposition~\ref{prop:derivative} we know that this then also applies to $C_N^{\prime}$, since the error decays with $\Landau(n^2 a_n)$.
Nevertheless we also see that for practical use it is important
to ensure the smoothness on the observed interval $[c,d]$, because of the term $n^2$. This leads to the following definition
\begin{definition}
Let $f:\, [a,b] \rightarrow \R$ be continuous and piecewise smooth, 
then we call the Chebyshev interpolation polynomial $C_{N,[c,d]}:\, [c,d] \rightarrow \R$ of $f_{|[c,d]}$ \textbf{the local smooth Chebyshev polynomial}
iff \[(y_k)_{k\in I} \cap (c,d) = \emptyset.\]
\end{definition}
This motivates the following algortithm to compute derivatives of a function $f$ (if the points $Y := (y_k)_{k\in I}$ are known):
\begin{lstlisting}
# input: function f, and point x, estimated lenght h
         set of disallowed points Y, number of interpolation points N
def compute_derivative(f,x,h,Y):
  if x not in Y:
  # classical derivative
     c = x-h; d= x+h
     # ensure that we are smooth on interval [c,d] 
     while c < a and b < d and intersection(Y,[c,d]) != {}:
        h = make_h_smaller(h)
        c = x-h; d= x+h
  
     CN = compute_interpolation(f,c,d,N)
     CN_prime = derive_polynomial(CN)
     return CN_prime(x)
  
  else: # x in Y
    # subgradient
    c[0] = x-h; d[0] = x
    c[1] = x; d[1] = x+h
    while all([c[i] < a and b < d[i] and \
       intersection(Y,(c[i],d[i])) != {} for i in [0,1]]):
       h = make_h_smaller(h)
       c[0] = x; d[0] = x+h
       c[1] = x-h; d[1] = x
  
     for i in [0,1]:
       CN[i] = compute_interpolation(f,c[i],d[i],N)
       CN_prime[i] = derive_polynomial(CN[i])
     return [CN_prime[0](x),CN_prime[1](x)]
\end{lstlisting}
As one can see we also included more general derivatives based on directional derivatives like subgradients. It also would be possible to define a weak derivative witch is
motivated by Chebyshev convergence theory defined by $f^{\prime}(x) := \frac{1}{2}(f^{\prime}(-x) + f^{\prime}(+x))$. \nL
A question which still remains, is what to do when the set $Y$ is unknown. This is still an open problem, but since one can observe the speed of the decay of the 
coefficients $a_n$ (or $b_n$) it is possible to locate non differentiable points. There are several works on this topic from spectral theory. \nL
Now to the $M$ dimensional case:
Consider a functional $f:\Omega \subseteq \R^M \rightarrow \R$, which is continuous and piecewise smooth on the domain $\Omega$.
We can compute the directional derivatives in a direction $h \in \R^M$ in a point $x \in \Omega$ by deriving the one dimensional
function $g(t):= f(x+th)$. Hence the methods from the one dimensional case, can be carried over to the $M$ dimensional setting.
\section{Examples}
as first we define the function $f_1$ given by (see also Figure~\ref{fig:func}):
\begin{equation*}
f_1(x) = \begin{cases}
          x^4 \text{ if } x > 0, \\
          0 \text{ else.}
         \end{cases}
\end{equation*}
First we compare computing the first derivative with local Chebyshev approximation with finite differences given by
\[ f^{\prime}(x) \approx \frac{f(x+h) - f(x-h)}{2h} =: f_h(x), \]
by computing the values at $x = 0$ and $x = 0.5$.
See Table~\ref{tab:errors1} for results. As we can observe the errors of $f_h$ and $C_{3,[x\pm h]}$ are equal. This comes without surprise, since the
derivative of the $3$rd Chebyshev polynomial is exactly the central difference quotient.
\begin{table} \label{tab:errors1}
\begin{tabular}{ccccc}
$x$   &  $h$  & $f_h$    & $C_{3,[x\pm h]}$ & $C_{5,[x\pm h]}$   \\
$0.5$ &  $1e-3$ & $2e-6$   & $2e-6$           & $2e-14$            \\
      &  $1e-4$ & $1.9e-8$ & $1.9e-8$         & $2.6e-13$          \\
      & $1e-5$ &$1.9e-10$& $1.9e-10$        & $1.8e-12$          \\
$0.0$ & $1e-3$  & $5e-10$ & $5e-10$          & $1.4e-10$          \\
      & $1e-4$ & $5e-13$ & $5e-13$          & $2.6e-13$          \\
      & $1e-5$ & $5e-16$ & $5e-16$          & $1.5e-16$          \\
\end{tabular}  
\caption{Errors for $f_1$}
\end{table}
Next we will modify $f_1$ in the following way: For some $\varepsilon > 0$, and a randomly standard normal distributed variable $X$ we define $f_2$ by
\begin{equation*}
f_2(x) = f_1(x) + \varepsilon X
\end{equation*}
and then try to differentiate it at $0.5$. the results can be seen at Table~\ref{tab:errors2}.
\begin{table} \label{tab:errors2}
\begin{tabular}{ccccccc}
$x$   &  $h$  & $f_h$      & $C_{3,[x\pm h]}$ & $C_{5,[x\pm h]}$  & $C_{7,[x\pm h]}$ \\
$0.5$ &  $1e-1$ & $2e-2$   & $2e-2$           & $5.5e-9$          & $4.2e-8$         \\
      &  $1e-2$ & $2.4e-6$ & $2e-4$         & $4e-7$              & $8e-7$           \\
      & $1e-3$ &  $2.4e-6$ & $2e-6$        & $1.8e-6$             & $7.3e-6$         \\
      & $1e-4$  & $1e-6$   & $4e-6$          & $2e-5$             & $2e-5$           \\
      & $1e-5$ & $2.8e-5$  & $9e-5$          & $2e-5$             & $7e-4$           \\
      & $1e-6$ & $7e-5$    & $1e-3$          & $4e-5$             & $2e-3$           \\
\end{tabular}  
\caption{Errors for $f_2$}
\end{table}
The errors are maximal values after taking some samples (\textbf{There is some space for improvement here, e.g. better statistics, but we get the picture...}) 
We Observe the following behavior: While $f_h$ and $C_{3,[x\pm h]}$ behave like expected,
we see that making $h$ smaller, does not make the approximation quality of $C_{5,[x\pm h]}$ and $C_{7,[x\pm h]}$ better, but even worse.
This can be explained by the fact, that for smooth $f$, we know that the function behaves like it's Taylor polynomial of lower order, while the
higher order terms are neglectable. This also means that computing a higher order Chebyshev polynomial does not make sense, and we even get into more trouble.
hence when choosing a suitable $h$ for the Chebyshev approximation should be not too small, and yet not too big.
\begin{figure} \label{fig:func}
\begin{center}
 \includegraphics[scale=0.25]{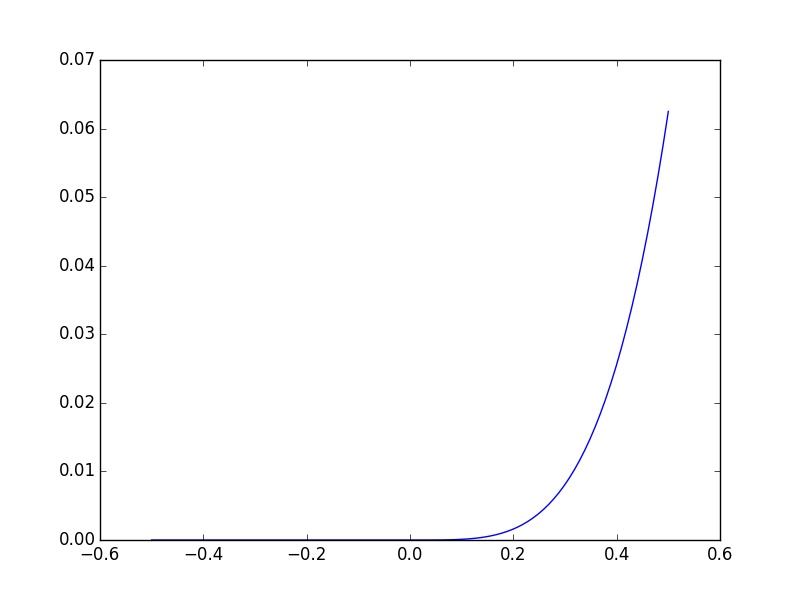}
\end{center}

\caption{Function $f_1$}
\end{figure} 

Finally we try the Local Chebyshev Method on the 2D Rosenbrock function
\begin{equation*}
R_{a,b}(x) = (a-x_0)^2 + b(x_1-x_0^2)^2, 
\end{equation*}
with parameters $a = 1$ and $b = 100$, and try to find the minimum with help of gradient methods. The argmin is $x_0 = (1,1)$
Additionally we add some disturbances $\delta(x)$ and $\varepsilon(x)$ in the interval $[-1e-6,1e-6]$. 
While $\delta$ is some randomly normal distributed variable, $\varepsilon$ is a jump function which changes it's sign (which is fatal for finite differences).
As optimization method we used Steepest Descent with Armijo-rule and $\norm{\nabla f} < 1e-3$ as terminating criterion. 
We used $h=1e-6$ for the finite difference method, and order $5$ polynomials with $h=1e-4$ for the Local Chebyshev Method.
and 
The number of iterations and the computed results (rounded up to 3 digits) can be seen in Table~\ref{tab:iterations}.
The results suggest, that computing the gradient with the Local Chebyshev Method makes the algorithm more robust. Also it was observed that
the stepsize $h$ should be not too small.
\begin{table} \label{tab:iterations}
\begin{tabular}{cccc}
Function & Method & Iteration Numbers & Result \\
$R_{a,b}$& Exact Gradient & $1427$    & $(0.999,  0.998)$ \\
         & Finite Differences & $1427$ &$(0.999,  0.998)$ \\
         & Local-Chebyshev    & $1428$ &$(0.999,  0.998)$  \\
$R_{a,b}+\delta$ & Finite Differences & $19999$ (interrupted) & $(0.986,  0.973)$ \\
                 & Local-Chebyshev    & $1090$                 & $( 0.996,   0.992)$ \\
$R_{a,b}+\varepsilon$ & Finite Differences & $19999$ (interrupted) & $(0.486,  0.234)$  \\
                      & Local-Chebyshev    & $1565$                & $(0.999  0.998)$ 
\end{tabular}  
\caption{Iteration numbers for optimization}
\end{table}

\end{document}